\documentclass[11pt]{amsart}
\usepackage[utf8]{inputenc}
\usepackage[english]{babel}
\usepackage{amscd,amsmath,latexsym,amsthm,amsfonts,amssymb,mathrsfs,graphicx,color,geometry}
\usepackage{hyperref}
\hypersetup{
	colorlinks=true,
	linkcolor=red,
	citecolor=green
}

\numberwithin{equation}{section}
\newtheorem{theorem}{Theorem}[section]
\newtheorem{proposition}[theorem]{Proposition}
\newtheorem{corollary}[theorem]{Corollary}

\theoremstyle{definition}

\newtheorem{remark}[theorem]{Remark}
\newtheorem{lemma}[theorem]{Lemma}
\newtheorem*{ack}{Acknowledgments}



\begin{document}

\title[Maximal Spaceability in Quasi-Banach Spaces]{Maximal Spaceability of the set of non-absolutely summing operators in quasi-Banach spaces}
\author{Daniel Ara\'ujo}
\address{Department of Mathematics, Federal Rural University of Pernambuco, Recife, Brazil}
\email{daniel.tomaz@ufrpe.br}

\subjclass[2020]{Primary 46A16; Secondary 46A45, 47B10, 47B37.}
\keywords{Lineability; spaceability; non-absolutely summing operators; quasi-Banach spaces; compact operators.}

\begin{abstract}
For every $0<p<1$, we establish maximal spaceability of the bounded operators on $\ell_p$ which fail to be $(r,s)$-absolutely summing for all $1\le s\le r<\infty$, using the coordinate-disjointization construction and Maddox's theorem. We give detailed isometric estimates and prove a corresponding maximal spaceability theorem inside the compact operators. The compact result is based on an explicit diagonal multiplier and a single weakly $1$-summable sequence witnessing the simultaneous failure of all admissible absolute summability conditions. A diagonal characterization, a logarithmic compact example, and extensions to operators with values in quasi-Banach spaces containing a copy of $\ell_p$ are also obtained. The constructed subspaces are isomorphic to $\ell_p$ and have maximal algebraic dimension whenever the ambient operator spaces have dimension $\mathfrak c$.
\end{abstract}

\maketitle

\section{Introduction and preliminaries}\label{s1}

The study of linear structures inside nonlinear sets has become a central topic in modern analysis. In particular, the concepts of lineability and spaceability, introduced by Gurariy in \cite{Gurariy1966} and further developed by Aron, Gurariy and Seoane-Sepúlveda in \cite{Aron}, provide a powerful framework to detect large algebraic and topological structures within sets that, at first glance, may appear highly irregular. Since then, this theory has experienced significant development, with deep connections to several areas such as Functional Analysis, Operator Theory, Complex Analysis, Linear Dynamics, Measure Theory, among others (see, for instance, \cite{arb,bag,Bernal,BPS,bot,carmona,conejero}) and the references therein. A comprehensive account of the subject can be found in the monograph \cite{book}. Additional examples involving dimension constraints and hypercyclic subspaces can be found in \cite{maximal,rolewicz}.

In the context of Operator Theory, the investigation of lineability and spaceability phenomena for classes of bounded linear operators has attracted considerable attention. In particular, the structure of non-absolutely summing operators has been extensively studied in Banach spaces, where several results ensure the existence of large linear subspaces under suitable geometric assumptions (see \cite{Pellegrino1,Fakhar,puglisi}). Notably, Kitson and Timoney \cite{KT} established important spaceability results relying on properties such as local convexity. However, the quasi-Banach setting presents a fundamentally different landscape. Spaces such as $\ell_p$ for $0<p<1$ lack local convexity, and many classical tools are no longer available (see \cite{book1,book3}). As a consequence, the behavior of operator ideals in this context is markedly different and often exhibits genuinely pathological features.

In this direction, several recent works have explored lineability and spaceability beyond the Banach framework (see \cite{Pellegrino2,BCFPmaximal}). These contributions highlight the richness of the quasi-Banach setting and reveal new phenomena that cannot be obtained by classical methods. In recent years, new approaches to lineability and spaceability have been developed, providing more flexible and constructive methods to produce large linear structures (see \cite{Gustavo,BCAJ,calderon,favaro,Favaro2024}). Moreover, refined notions such as pointwise lineability and pointwise transversal lineability have been investigated in sequence spaces, revealing new structural phenomena (see \cite{transversal,PAR}). These developments highlight the growing interest in understanding linear structures beyond classical settings. More recently, the investigation of lineability and spaceability phenomena within classes of operators has gained renewed attention. For instance, large linear structures have been identified in classes of summing operators and related operator ideals (see \cite{Albuquerque}), as well as in sets of operators defined by geometric or norm-attaining properties (see \cite{Dantas}). These results reinforce the richness of the operator-theoretic perspective in the study of lineability. In particular,  Tomaz \cite{Tomaz2018} obtained a $\mathfrak{c}$-dimensional linear subspace of non-$\left(r,s\right)$-absolutely summing operators on $\ell_p$ for $0 < p < 1$ . The bounded-operator construction below is included with a detailed isometric proof, and serves as the basis for the compact-operator results and the extensions developed here.

The principal purpose of the present work is to describe closed subspaces of operators on $\ell_p$, $0<p<1$, whose nonzero elements fail \emph{every} $(r,s)$-absolute summability condition with $1\le s\le r<\infty$. For bounded operators, the disjointization argument transfers the failure of absolute summability of the identity, as established by Maddox \cite{Maddox}, to a closed subspace isometric to $\ell_p$. We preserve this construction and provide the full operator quasi-norm estimates, including uniform convergence of the defining series and closedness of the range.

The compact-operator case poses a separate difficulty. A diagonal multiplier is compact when its coefficients tend to zero, but this fact alone does not imply non-absolute summability. We overcome this difficulty with an explicit sequence which is weakly $1$-summable yet whose images under a compact diagonal operator have quasi-norms uniformly bounded below. Thus the \emph{same operator and the same sequence} work simultaneously for all $(r,s)$. We further obtain a sharp characterization of the diagonal multipliers with this property, together with a rigorous treatment of the slowly decaying logarithmic example. The Orlicz summability theorem for $\ell_1$, recalled below as in \cite[Chapter~1, p.~13]{Diestel}, provides the complementary implication in the diagonal criterion.

Finally, we use isomorphic embeddings of $\ell_p$ into arbitrary quasi-Banach spaces to transfer both the bounded and the compact constructions. The argument takes the exact disjoint-support identity \emph{before} applying the embedding; consequently, no preservation of coordinate supports in the new codomain is required. The corresponding two-sided estimates also ensure closedness. The paper has the two-section organization: Section \ref{s1} contains the introduction, notation and basic geometric construction; Section \ref{s2} contains the principal results, examples and extensions.

\medskip
\noindent\textbf{Notation and basic facts.}
Let $E$ and $F$ be Banach or quasi-Banach spaces over $\mathbb K=\mathbb R$ or $\mathbb C$. We denote by $\mathcal L(E,F)$ the vector space of bounded linear operators, endowed with the operator quasi-norm (see \cite[p. 322]{kaltonlp})
\[
\|T\|=\sup\{\|Tx\|_F:\|x\|_E\le1\}.
\]
We write $\mathcal K(E,F)$ for the compact operators. This is a closed vector subspace of $\mathcal L(E,F)$: the uniform operator quasi-norm limit of compact maps is compact, since a sufficiently fine finite net for the image of the unit ball of an approximating compact map remains an approximate finite net for the limiting map.

A quasi-norm on $E$ is a positive-definite, absolutely homogeneous function $\|\cdot\|$ such that, for some constant $C\ge1$,
\[
\|x+y\|\le C(\|x\|+\|y\|)\qquad (x,y\in E).
\]
If $E$ is complete for its induced topology, it is a quasi-Banach space. A quasi-norm is a $p$-norm, $0<p\le1$, if
\[
\|x+y\|^p\le\|x\|^p+\|y\|^p.
\]
Throughout, we work with the $p$-Banach space $\ell_p$, $0<p<1$, and its standard $p$-norm
\[
\|x\|_p=\left(\sum_{n=1}^\infty|x_n|^p\right)^{1/p}.
\]
In particular, $\|x\|_1\le\|x\|_p$ and $\ell_p$ embeds continuously into $\ell_1$. Moreover, the \emph{topological dual} $(\ell_p)'$ identifies with $\ell_\infty$: every $f\in(\ell_p)'$ has the form (see, e.g., \cite[p. 21]{book1})
\begin{equation}\label{eq:dual}
	f(x)=\sum_{n=1}^\infty b_nx_n,\quad b\in\ell_\infty,
	\qquad |f(x)|\le\|b\|_\infty\|x\|_1.
\end{equation}
Indeed, the inequality gives continuity of each such functional; conversely, the scalar sequence $b_n=f(e_n)$ is bounded and the density of finitely supported vectors yields the representation.

For $1\le s\le r<\infty$ we denote by $\Pi_{(r,s)}(E,F)$ the set of $(r,s)$-absolutely summing operators. Thus $A\in\mathcal L(E,F)$ belongs to $\Pi_{(r,s)}(E,F)$ if
\begin{equation}\label{eq:definition}
	\sum_{m=1}^\infty\|Ax^{(m)}\|_F^r<\infty
	\quad\text{whenever}\quad
	\sum_{m=1}^\infty|f(x^{(m)})|^s<\infty\ \text{for every }f\in E'.
\end{equation}
A sequence satisfying the condition on the right is called \emph{weakly $s$-summable}. We use the standard sequential definition in the quasi-Banach framework; for the Banach case see \cite{Diestel} and, for $\ell_p$, see \cite{Maddox}. We write (for simplicity of notation)
\begin{equation}\label{eq:badset}
	\mathscr N(E,F)=\mathcal L(E,F)\setminus
	\bigcup_{1\le s\le r<\infty}\Pi_{(r,s)}(E,F).
\end{equation}
It is important that $A\in\mathscr N(E,F)$ means the failure of $(r,s)$-absolute summability for \emph{each} admissible pair, and not merely for one pair.

We recall the notions of lineability and spaceability from \cite{Aron,Bmax}. If $A$ is a subset of a vector space $V$, then $A$ is \emph{$\alpha$-lineable} if $A\cup\{0\}$ contains a linear subspace of Hamel dimension $\alpha$. If $V$ is also a topological vector space, it is \emph{$\alpha$-spaceable} if this subspace may be chosen closed. We say that $A$ is \emph{maximal spaceable} in $V$ when $\alpha=\dim V$. 

For a set $A$, we denote its cardinality by $|A|$, and we write  $\mathfrak c:=|\mathbb R|$ for the cardinality of the continuum. The dimension calculation used below follows from the exact general result of Popoola and Tweddle \cite[Proposition]{Popoola}: \emph{every infinite-dimensional complete metrizable topological vector space has Hamel dimension at least $\mathfrak c$, and if it is separable then its dimension is precisely $\mathfrak c$.} In our setting one obtains
\begin{equation}\label{eq:dimensions}
	\dim\ell_p=\dim\mathcal K(\ell_p,\ell_p)
	=\dim\mathcal L(\ell_p,\ell_p)=\mathfrak c.
\end{equation}
For completeness, $\ell_p$ is infinite dimensional, separable and complete, so the cited proposition gives $\dim\ell_p=\mathfrak c$. Since each operator $\ell_p\to\ell_p$ is determined by the images of the standard unit vectors, whose possibilities form a set of cardinality $\mathfrak c$, we have $|\mathcal L(\ell_p,\ell_p)|\le\mathfrak c$. On the other hand, the injection
\[
\ell_p\longrightarrow\mathcal K(\ell_p,\ell_p),\qquad
z\longmapsto\big(x\mapsto x_1z\big)
\]
shows $\dim\mathcal K(\ell_p,\ell_p)\ge\mathfrak c$; both operator spaces therefore have dimension exactly $\mathfrak c$. This also explains precisely where the Popoola--Tweddle dimension theorem is used rather than attributing the operator-space dimension to it without the accompanying cardinality argument.

\begin{theorem}[Maddox {\cite[Theorem 4]{Maddox}}]\label{t1}
	Let $0<p<1$ and $1\le s\le r<\infty$. Then the identity operator $i:\ell_p\longrightarrow\ell_p$
	is not $(r,s)$-absolutely summing.
\end{theorem}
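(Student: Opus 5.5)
To prove Theorem~\ref{t1} we exhibit one explicit sequence in $\ell_p$ that is weakly $s$-summable for every $s\ge1$ but whose terms do not tend to zero in $\|\cdot\|_p$. Then $\sum_m\|x^{(m)}\|_p^r=\infty$ for every $r$, so the identity fails \eqref{eq:definition} for every admissible pair $(r,s)$ at once.

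The dual \eqref{eq:dual} only sees $\ell_p$ through the $\ell_1$-norm, while the $p$-norm can be much larger. So we take normalized averages of long disjoint blocks. Choose disjoint consecutive blocks $B_m\subset\mathbb N$ with $|B_m|=N_m$, and set
\[
x^{(m)}=N_m^{-1/p}\sum_{n\in B_m}e_n .
\]
Then $\|x^{(m)}\|_p=1$ for all $m$. For $f\leftrightarrow b\in\ell_\infty$ we get
\[
|f(x^{(m)})|\le\|b\|_\infty N_m^{1-1/p}.
\]
Since $1-1/p<0$, we choose $N_m$ growing fast enough that $\sum_m N_m^{1-1/p}<\infty$; for instance $N_m=2^{m\lceil p/(1-p)\rceil}$, or simply $N_m$ with $N_m^{1/p-1}\ge 2^m$.

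With this choice $\sum_m|f(x^{(m)})|\le\|b\|_\infty\sum_m N_m^{1-1/p}<\infty$, so the sequence is weakly $1$-summable. Because $|f(x^{(m)})|\le\|b\|_\infty$, we also have $|f(x^{(m)})|^s\le\|b\|_\infty^{s-1}|f(x^{(m)})|$ for every $s\ge1$, hence the sequence is weakly $s$-summable. On the other hand, $\sum_m\|i(x^{(m)})\|_p^r=\sum_m 1=\infty$. Therefore $i\notin\Pi_{(r,s)}(\ell_p,\ell_p)$ for all $1\le s\le r<\infty$.

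The only step needing care is the use of the representation \eqref{eq:dual}. One must check that every continuous functional acts through a bounded sequence $b$, which the paper already recalls, and that the estimate $|f(x)|\le\|b\|_\infty\|x\|_1$ applies to finitely supported vectors such as $x^{(m)}$, where it is immediate. The construction is otherwise elementary, and the main quantitative point is the choice of $N_m$ making $\sum_m N_m^{1-1/p}$ converge. This requires exactly $p<1$. The argument also yields a single sequence that works uniformly in $(r,s)$, which is the form used later in the paper.
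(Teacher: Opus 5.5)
Your proof is correct. With $x^{(m)}=N_m^{-1/p}\sum_{n\in B_m}e_n$ you get $\|x^{(m)}\|_p=1$ and $\|x^{(m)}\|_1=N_m^{1-1/p}\le 2^{-m}$. Hence $\sum_m|f(x^{(m)})|\le\|b\|_\infty$ for every $f\in(\ell_p)'$, while $\sum_m\|x^{(m)}\|_p^r=\infty$ for every $r$. You do not even need the full representation \eqref{eq:dual}: on finitely supported vectors, linearity gives $f(x)=\sum_n x_nf(e_n)$, and $\sup_n|f(e_n)|<\infty$ by continuity.

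The paper does not prove Theorem~\ref{t1}. It imports it from Maddox as a black box, and for that reason Step~4 of Theorem~\ref{thm3.1} has to allow the witness to depend on $(r,s)$.

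Your self-contained argument is the same mechanism the paper develops later.
\begin{itemize}
\item Take $\lambda\equiv1$ in Proposition~\ref{prop:diag}; this $\lambda$ lies in $\ell_\infty$ but not in $\ell_q$, and $D_\lambda$ is then the identity.
\item The vectors $z^F$ there are your block averages rescaled to $\|z^F\|_1=1$, with $\|z^F\|_p=|F|^{1/q}$.
\item Lemma~\ref{lem:witness} performs exactly your $2^{-k}$ rescaling, producing one weakly $1$-summable sequence whose $p$-norms are bounded below.
\item The paper also notes, in the proof of Theorem~\ref{thm:extension}, that the witness of Lemma~\ref{lem4.1} satisfies $\|x^{(k)}\|_p\ge1$, so it already works for the identity.
\end{itemize}

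What your route buys is that Theorem~\ref{t1} becomes an elementary consequence of the fact that the dual of $\ell_p$ only sees the $\ell_1$-norm. It gives a single witness for all pairs $(r,s)$, so both the appeal to Maddox and the caveat about pair-dependent witnesses in Theorem~\ref{thm3.1} become unnecessary. The citation buys only brevity.
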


\noindent In particular, for every fixed admissible pair $(r,s)$ there exists a weakly $s$-summable sequence $(x^{(m)})\subset\ell_p$ such that
\begin{equation}\label{eq:maddoxwitness}
	\sum_{m=1}^\infty\|x^{(m)}\|_p^r=\infty.
\end{equation}
This sequence may depend on the pair $(r,s)$; a common sequence will be constructed later for the compact result. The following form of the classical Orlicz theorem will be used only in the diagonal characterization; see \cite[Chapter~1, p.~13]{Diestel}.
\begin{theorem}[Orlicz]\label{thm:orlicz}
	For every weakly $1$-summable sequence $(z^{(m)})_{m\ge1}\subset\ell_1$ one has
	\[
	\sum_{m=1}^\infty\|z^{(m)}\|_1^2<\infty.
	\]
	Equivalently, $\operatorname{id}_{\ell_1}\in\Pi_{(2,1)}(\ell_1,\ell_1)$.
\end{theorem}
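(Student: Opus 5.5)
The statement to prove is the Orlicz theorem: every weakly $1$-summable sequence $(z^{(m)})$ in $\ell_1$ satisfies $\sum_m\|z^{(m)}\|_1^2<\infty$. The plan is a Rademacher averaging argument combined with Khintchine's inequality, applied coordinatewise.

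\textbf{Step 1: a uniform bound on sign sums.} Weak $1$-summability means $\sum_m|f(z^{(m)})|<\infty$ for every $f\in\ell_\infty=(\ell_1)'$. The linear map $\ell_\infty\to\ell_1(\mathbb N)$, $f\mapsto (f(z^{(m)}))_m$, has closed graph. By the closed graph theorem there is therefore a constant $M$ with $\sum_m|f(z^{(m)})|\le M\|f\|_\infty$. Taking the supremum over $\|f\|_\infty\le1$ gives, for every $N$ and every choice of signs $\varepsilon_m=\pm1$,
\[
\Big\|\sum_{m=1}^N\varepsilon_m z^{(m)}\Big\|_1\le M .
\]
The estimates below are uniform in $N$, so it suffices to fix $N$ and bound $\sum_{m\le N}\|z^{(m)}\|_1^2$ by a constant depending only on $M$.

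\textbf{Step 2: average over signs and apply Khintchine.} Let $r_m$ be Rademacher functions on $[0,1]$. Averaging the bound of Step 1 gives
\[
M\ge\int_0^1\Big\|\sum_{m\le N} r_m(t)z^{(m)}\Big\|_1dt
=\sum_{k}\int_0^1\Big|\sum_{m\le N} r_m(t)z^{(m)}_k\Big|dt .
\]
Khintchine's inequality with exponent $1$, with constant $A_1>0$ (in the complex case one treats real and imaginary parts separately), bounds each inner integral below. This yields
\[
M\ge A_1\sum_k\Big(\sum_{m\le N}|z^{(m)}_k|^2\Big)^{1/2}.
\]

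\textbf{Step 3: reverse Minkowski.} This is the main point. We need
\[
\Big(\sum_m\Big(\sum_k|z^{(m)}_k|\Big)^2\Big)^{1/2}\le\sum_k\Big(\sum_m|z^{(m)}_k|^2\Big)^{1/2},
\]
which is the triangle inequality in $\ell_2$ (over $m$) applied to the vectors $v_k=(|z^{(m)}_k|)_m$, summed over $k$. Combining it with Step 2 gives
\[
\Big(\sum_{m\le N}\|z^{(m)}\|_1^2\Big)^{1/2}\le M/A_1 .
\]
Letting $N\to\infty$ proves the claim, and hence $\operatorname{id}_{\ell_1}\in\Pi_{(2,1)}$.

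The hardest step is the lower Khintchine bound in Step 2, which we take as the classical inequality. The closed-graph step of Step 1 is routine.
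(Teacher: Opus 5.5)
Your argument is correct: the closed-graph step turns weak $1$-summability into the uniform sign bound $\|\sum_{m\le N}\varepsilon_m z^{(m)}\|_1\le M$, and Khintchine's inequality followed by the triangle inequality in $\ell_2$ gives the bound $M/A_1$ (the inequality you call ``reverse Minkowski'' is just the ordinary Minkowski inequality). The paper gives no proof of this statement and only quotes it from Diestel--Jarchow--Tonge; the standard proof of Orlicz's theorem there rests on the same Rademacher averaging and Khintchine's inequality, so your route matches the intended one.
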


\medskip
\noindent\textbf{Disjoint-support operators.}
Let $(\mathbb N_k)_{k\ge1}$ be a partition of $\mathbb N$ into infinitely many pairwise disjoint infinite sets (see, e.g., \cite[p. 127]{book}), and write
\[
\mathbb N_k=\{n_1^{(k)}<n_2^{(k)}<\cdots\},\qquad
\ell_p^{(k)}=\{x\in\ell_p:x_n=0\text{ if }n\notin\mathbb N_k\}.
\]
For every $k$, define $u^{(k)}:\ell_p\to\ell_p^{(k)}$ by
\[
\big(u^{(k)}x\big)_{n_m^{(k)}}=x_m\quad(m\ge1)
\]
and let $i_k:\ell_p^{(k)}\hookrightarrow\ell_p$ be the canonical inclusion. We consider 
\begin{equation}\label{eq:uk}
	u_k=i_k\circ u^{(k)}:\ell_p\longrightarrow\ell_p,
	\qquad u_kx=\sum_{m=1}^\infty x_m e_{n_m^{(k)}}.
\end{equation}
For any $x\in\ell_p$, since the correspondence $m\mapsto n_m^{(k)}$ is injective and exhausts $\mathbb N_k$, we have
\begin{equation}\label{eq:ukisometry}
	\|u_kx\|_p^p
	=\sum_{n\in\mathbb N_k}|(u_kx)_n|^p
	=\sum_{m=1}^\infty|x_m|^p=\|x\|_p^p.
\end{equation}
Thus $u_k$ is a linear isometry and its range is $\ell_p^{(k)}$. If $k\ne j$, the supports of $u_kx$ and $u_jy$ are disjoint for \emph{any} $x,y\in\ell_p$. Consequently, for each finite set $F\subset\mathbb N$ and scalars $(a_j)_{j\in F}$,
\begin{equation}\label{eq:finiteidentity}
	\left\|\sum_{j\in F}a_ju_jx\right\|_p^p
	=\sum_{j\in F}\sum_{n\in\mathbb N_j}|a_j(u_jx)_n|^p
	=\sum_{j\in F}|a_j|^p\sum_{m=1}^\infty|x_m|^p
	=\left(\sum_{j\in F}|a_j|^p\right)\|x\|_p^p.
\end{equation}
This \emph{exact equality}, rather than a quasi-triangle inequality, is the foundation of the following isometric constructions. The operators $(u_k)_{k\ge1}$ are linearly independent: if $\sum_{j\in F}a_ju_j=0$, apply~\eqref{eq:finiteidentity} to $x=e_1$ to obtain $\sum_{j\in F}|a_j|^p=0$.

\section{Main results}\label{s2}

\begin{theorem}\label{thm3.1}
	For every $0<p<1$, the set
	\[
	\mathcal L(\ell_p;\ell_p)\setminus
	\bigcup_{1\le s\le r<\infty}\Pi_{(r,s)}(\ell_p;\ell_p)
	\]
	is maximal spaceable. More precisely, it contains, except for the zero operator, a closed subspace of Hamel dimension $\mathfrak c$ which is \emph{linearly isometric to $\ell_p$}.
\end{theorem}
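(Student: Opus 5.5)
The plan is to define the linear map $\Phi:\ell_p\to\mathcal L(\ell_p;\ell_p)$ by
\[
\Phi(a)=\sum_{k=1}^\infty a_k u_k,\qquad a=(a_k)\in\ell_p,
\]
and show that it is a linear isometry onto its range, which is therefore closed, with every nonzero element outside every $\Pi_{(r,s)}$. The first step is convergence. For $x\in\ell_p$ and finite $F\subset\mathbb N$, identity \eqref{eq:finiteidentity} gives $\|\sum_{j\in F}a_ju_jx\|_p^p=(\sum_{j\in F}|a_j|^p)\|x\|_p^p$. Hence the partial sums $S_N=\sum_{k\le N}a_ku_k$ satisfy
\[
\|S_M-S_N\|^p=\sum_{N<k\le M}|a_k|^p .
\]
So $(S_N)$ is Cauchy in the operator $p$-norm, and it converges in the complete space $\mathcal L(\ell_p;\ell_p)$, with the pointwise limit $\Phi(a)x=\sum_k a_ku_kx$. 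Passing to the limit in the same identity, using continuity of the $p$-norm, yields the exact formula
\[
\|\Phi(a)x\|_p=\|a\|_p\|x\|_p \qquad\text{for all }x\in\ell_p .
\]
Thus $\|\Phi(a)\|=\|a\|_p$, so $\Phi$ is a linear isometry. Its range $V=\Phi(\ell_p)$ is complete, hence closed, and $\dim V=\dim\ell_p=\mathfrak c$ by \eqref{eq:dimensions}. Since $\dim\mathcal L(\ell_p;\ell_p)=\mathfrak c$, this gives maximal spaceability.

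The second step shows non-summability. Fix $a\neq0$ and any admissible pair $(r,s)$. By Theorem~\ref{t1} (in the form \eqref{eq:maddoxwitness}) there is a weakly $s$-summable sequence $(x^{(m)})\subset\ell_p$ with $\sum_m\|x^{(m)}\|_p^r=\infty$. The exact formula above gives
\[
\sum_m\|\Phi(a)x^{(m)}\|_p^r=\|a\|_p^r\sum_m\|x^{(m)}\|_p^r=\infty .
\]
So $\Phi(a)\notin\Pi_{(r,s)}(\ell_p;\ell_p)$. Since the pair was arbitrary, $\Phi(a)$ lies in the set of the statement. Notably, the same weakly summable sequence is used as a test sequence for $\Phi(a)$; no transformation of the sequence is needed, because $\Phi(a)$ itself is $\|a\|_p$ times an isometry.

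I expect the main obstacle to be the convergence and limit step, that is, justifying the exact norm identity for the infinite sum. The point is to avoid any use of the quasi-triangle inequality with a constant, which could accumulate. This is handled by applying \eqref{eq:finiteidentity} to the differences $S_M-S_N$, which are finite disjoint-support sums. This gives uniform Cauchy estimates, and then the continuity of $\|\cdot\|_p$ lets one pass to the limit in $\|S_Nx\|_p^p=(\sum_{k\le N}|a_k|^p)\|x\|_p^p$.
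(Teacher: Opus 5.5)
Your proposal is correct and follows essentially the same route as the paper's proof. That route is: convergence of $\sum_j a_ju_j$ in operator quasi-norm via the exact disjoint-support identity \eqref{eq:finiteidentity}; the pointwise identity $\|S(a)x\|_p=\|a\|_p\|x\|_p$; isometry and closedness of the range; a Maddox witness for each pair $(r,s)$; and the dimension count from \eqref{eq:dimensions}. The only cosmetic difference is that you get closedness from completeness of the isometric image, whereas the paper writes out the equivalent Cauchy-sequence argument.
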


\begin{proof}
	We proceed in five steps, explicitly verifying all convergence and isometry assertions.
	
	\smallskip\noindent\emph{Step 1: Convergence in the operator quasi-norm.}
	Fix $a=(a_j)\in\ell_p$. For $N\in\mathbb N$, put
	\[
	S_N(a)=\sum_{j=1}^{N}a_ju_j\in\mathcal L(\ell_p;\ell_p).
	\]
	Applying~\eqref{eq:finiteidentity} with $F=\{N+1,\dots,M\}$, we obtain for every $M>N$ and $x\in\ell_p$,
	\[
	\|(S_M(a)-S_N(a))x\|_p^p
	=\left(\sum_{j=N+1}^{M}|a_j|^p\right)\|x\|_p^p.
	\]
	Taking $p$th roots and then the supremum over $\|x\|_p\le1$ gives
	\begin{equation}\label{eq:tails}
		\|S_M(a)-S_N(a)\|_{\mathcal L(\ell_p;\ell_p)}
		=\left(\sum_{j=N+1}^{M}|a_j|^p\right)^{1/p}.
	\end{equation}
	To see that this is an \emph{equality}, not merely an upper bound, substitute $x=e_1$, which has unit $p$-norm. The right-hand side tends to zero as $M,N\to\infty$. Because $\mathcal L(\ell_p;\ell_p)$ is complete in the operator quasi-norm, $(S_N(a))$ converges to a bounded linear operator, denoted
	\begin{equation}\label{eq:S}
		S(a)=\sum_{j=1}^\infty a_ju_j.
	\end{equation}
	This also proves that the infinite operator series converges uniformly on the unit ball of $\ell_p$.
	
	\smallskip\noindent\emph{Step 2: An exact pointwise norm identity.}
	For each fixed $x\in\ell_p$, the operator convergence implies $S_N(a)x\to S(a)x$ in $\ell_p$. From~\eqref{eq:finiteidentity},
	\[
	\|S_N(a)x\|_p^p
	=\left(\sum_{j=1}^{N}|a_j|^p\right)\|x\|_p^p.
	\]
	The quasi-norm is continuous, and the sum on the right tends to $\|a\|_p^p$. Passing to the limit therefore yields
	\begin{equation}\label{eq:Sidentity}
		\boxed{\ \|S(a)x\|_p^p=\|a\|_p^p\|x\|_p^p\qquad(a,x\in\ell_p).\ }
	\end{equation}
	In particular, for each $x$, the vector $S(a)x$ is obtained from the absolutely coordinatewise disjoint family $(a_ju_jx)$ with no cancellation between blocks.
	
	\smallskip\noindent\emph{Step 3: Linear isometry and closedness.}
	Linearity of $S$ follows from linearity of all partial sums and continuity of the limit. Equation~\eqref{eq:Sidentity} gives for $\|x\|_p\le1$,
	\[
	\|S(a)x\|_p=\|a\|_p\|x\|_p\le\|a\|_p.
	\]
	Taking the supremum, and once again evaluating at $x=e_1$, we obtain both inequalities needed for the norm equality:
	\begin{equation}\label{eq:Sisometry}
		\boxed{\ \|S(a)\|_{\mathcal L(\ell_p;\ell_p)}=\|a\|_p.\ }
	\end{equation}
	In particular, if $S(a)=0$, then $\|a\|_p=0$, so $a=0$. Moreover, for $a,b\in\ell_p$,
	\begin{equation}\label{eq:Sdifference}
		\|S(a)-S(b)\|=\|S(a-b)\|=\|a-b\|_p.
	\end{equation}
	Let $(S(a^{(m)}))$ converge to some $A\in\mathcal L(\ell_p;\ell_p)$. It is Cauchy, and by~\eqref{eq:Sdifference} the coefficients $(a^{(m)})$ form a Cauchy sequence in $\ell_p$. Completeness supplies $a\in\ell_p$ with $a^{(m)}\to a$. By the same isometry, $S(a^{(m)})\to S(a)$, so $A=S(a)$. Thus $S(\ell_p)$ is a \emph{closed} linear subspace, not just a dense subspace of its closure.
	
	\smallskip\noindent\emph{Step 4: Failure of all absolute summability conditions.}
	Fix $a\in\ell_p\setminus\{0\}$ and an arbitrary pair $1\le s\le r<\infty$. By Maddox's theorem (Theorem~\ref{t1}) there exists a weakly $s$-summable sequence $(x^{(m)})\subset\ell_p$ satisfying~\eqref{eq:maddoxwitness}. Applying~\eqref{eq:Sidentity} to each $x^{(m)}$ yields
	\[
	\|S(a)x^{(m)}\|_p=\|a\|_p\|x^{(m)}\|_p.
	\]
	Since $\|a\|_p>0$ and $r$ is finite,
	\begin{align*}
		\sum_{m=1}^\infty\|S(a)x^{(m)}\|_p^r
		&=\sum_{m=1}^\infty\big(\|a\|_p\|x^{(m)}\|_p\big)^r\\
		&=\|a\|_p^r\sum_{m=1}^\infty\|x^{(m)}\|_p^r=\infty.
	\end{align*}
	Hence $S(a)\notin\Pi_{(r,s)}(\ell_p;\ell_p)$. The pair was arbitrary and $S(a)$ is independent of its choice. Therefore
	\[
	S(\ell_p)\setminus\{0\}\subset\mathscr N(\ell_p,\ell_p).
	\]
	Notice that the choice of a witness sequence may depend on $(r,s)$ here; no unjustified assumption of a common Maddox witness has been made.
	
	\smallskip\noindent\emph{Step 5: Dimension and maximality.}
	Because $S$ is injective, $\dim S(\ell_p)=\dim\ell_p=\mathfrak c$. The ambient dimension identity~\eqref{eq:dimensions} gives $\dim\mathcal L(\ell_p;\ell_p)=\mathfrak c$. The closed subspace $S(\ell_p)$ thus has full Hamel dimension, completing the maximal spaceability proof.
\end{proof}
This result extends and complements the results established in \cite{Tomaz2018}.
\begin{corollary}\label{cor:ordinary}
	For every $0<p<1$, the set $\mathcal L(\ell_p;\ell_p)\setminus\bigcup_{1\le r<\infty}\Pi_{(r,r)}(\ell_p;\ell_p)$ is maximal spaceable.
\end{corollary}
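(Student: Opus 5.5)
The corollary follows from Theorem~\ref{thm3.1} by a set inclusion. Since each diagonal pair $(r,r)$ with $1\le r<\infty$ is admissible, i.e.\ satisfies $1\le s\le r<\infty$ with $s=r$, we have
\[
\bigcup_{1\le r<\infty}\Pi_{(r,r)}(\ell_p;\ell_p)\subset\bigcup_{1\le s\le r<\infty}\Pi_{(r,s)}(\ell_p;\ell_p).
\]
Taking complements in $\mathcal L(\ell_p;\ell_p)$ reverses the inclusion. Hence the set $\mathscr N(\ell_p,\ell_p)$ of \eqref{eq:badset} is contained in the set appearing in the corollary.

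Next we use the closed subspace $S(\ell_p)$ constructed in the proof of Theorem~\ref{thm3.1}. Its nonzero elements lie in $\mathscr N(\ell_p,\ell_p)$, so by the inclusion they also lie in the larger set of the corollary. The subspace $S(\ell_p)$ is closed in $\mathcal L(\ell_p;\ell_p)$ by Step~3, and it has Hamel dimension $\mathfrak c=\dim\mathcal L(\ell_p;\ell_p)$ by Step~5 and \eqref{eq:dimensions}. Therefore the larger set is also maximal spaceable.

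There is no real obstacle. The only point to check is that maximal spaceability is monotone under enlarging the set within the same ambient space. This holds because both closedness and dimension are measured in the fixed space $\mathcal L(\ell_p;\ell_p)$. Alternatively, one could repeat Step~4 directly with $s=r$, using the Maddox witness from Theorem~\ref{t1} for the pair $(r,r)$ together with identity \eqref{eq:Sidentity}.
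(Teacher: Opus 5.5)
Your proposal is correct and is essentially the paper's proof. You note that the union over the diagonal pairs $(r,r)$ is contained in the union over all admissible pairs, so the closed $\mathfrak c$-dimensional subspace $S(\ell_p)$ from Theorem~\ref{thm3.1} lies, apart from zero, in the larger complement, and \eqref{eq:dimensions} then gives maximality.
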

\begin{proof}
	By definition, the union over $(r,r)$ is contained in the union over $1\le s\le r<\infty$; hence the closed subspace in Theorem~\ref{thm3.1} is also contained in this complement, apart from zero. It has ambient maximal dimension by~\eqref{eq:dimensions}.
\end{proof}

\begin{remark}\label{rem:method}
	The isometric structure of the above proof is crucial. Since the $u_k$ have disjoint target supports, the $p$th power of the quasi-norm splits exactly, and the operator quasi-norm of the series is determined by the $\ell_p$-norm of its coefficients. In particular, closedness is a consequence of the isometry~\eqref{eq:Sdifference}; it cannot be deduced merely from the coordinatewise convergence of coefficients. This construction is related to methods used in quasi-Banach sequence spaces \cite{Pellegrino2,Tomaz2018}, and does not require local convexity or operator-range spaceability criteria such as \cite{KT}.
\end{remark}

\begin{lemma}[A common witness]\label{lem:witness}
	Let $0<p<1$, let $X$ be a quasi-Banach space, and let $A\in\mathcal L(\ell_p,X)$. Suppose that
	\begin{equation}\label{eq:unboundedl1}
		\sup\{\|Ax\|_X:x\in\ell_p,\ \|x\|_1\le1\}=\infty.
	\end{equation}
	Then there exists a sequence $(x^{(k)})_{k\ge1}\subset\ell_p$ such that
	\begin{equation}\label{eq:common}
		\sum_{k=1}^\infty\|x^{(k)}\|_1<\infty,
		\qquad\|Ax^{(k)}\|_X\ge1\quad\text{for every }k.
	\end{equation}
	Consequently $A$ fails to be $(r,s)$-absolutely summing for \emph{every} $1\le s\le r<\infty$, with the same sequence witnessing all failures.
\end{lemma}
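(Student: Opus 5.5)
The plan has two parts: build the common witness from the unboundedness hypothesis~\eqref{eq:unboundedl1}, then check that this single sequence violates every admissible $(r,s)$-condition.

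\emph{Construction.} By~\eqref{eq:unboundedl1}, for each $k\ge1$ there is $y^{(k)}\in\ell_p$ with $\|y^{(k)}\|_1\le1$ and $\|Ay^{(k)}\|_X\ge 2^k$. Set $x^{(k)}=2^{-k}y^{(k)}$. Then $\|x^{(k)}\|_1\le 2^{-k}$, so $\sum_k\|x^{(k)}\|_1\le1<\infty$. By homogeneity of the quasi-norm, $\|Ax^{(k)}\|_X=2^{-k}\|Ay^{(k)}\|_X\ge1$. This gives~\eqref{eq:common}. No quasi-triangle inequality is needed here; we only use homogeneity. The vectors $y^{(k)}$ lie in $\ell_p$, so $A$ can be applied to them, even though the supremum in~\eqref{eq:unboundedl1} is taken with respect to the $\ell_1$-norm.

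\emph{Weak summability.} Let $f\in(\ell_p)'$. By~\eqref{eq:dual}, $|f(x^{(k)})|\le\|b\|_\infty\|x^{(k)}\|_1$, so $\sum_k|f(x^{(k)})|<\infty$. Thus $(x^{(k)})$ is weakly $1$-summable. For $s\ge1$, the terms $|f(x^{(k)})|$ are eventually at most $1$, which gives $|f(x^{(k)})|^s\le|f(x^{(k)})|$ for large $k$. Hence the sequence is weakly $s$-summable for every $s\ge1$. If one prefers to avoid the tail argument, one can use the inclusion $\ell_1\subset\ell_s$ directly.

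\emph{Failure of summability.} Fix any $1\le s\le r<\infty$. Since $\|Ax^{(k)}\|_X^r\ge1$ for every $k$, we get $\sum_k\|Ax^{(k)}\|_X^r=\infty$. By~\eqref{eq:definition}, this means $A\notin\Pi_{(r,s)}(\ell_p,X)$, and the same sequence works for all pairs at once. The main point to state carefully is the choice of $y^{(k)}$ with geometric growth, which is exactly what the hypothesis provides. Everything else is the duality estimate~\eqref{eq:dual}, so I expect no real obstacle.
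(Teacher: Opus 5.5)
Your proposal is correct and follows essentially the same route as the paper: choose $y^{(k)}$ with $\|y^{(k)}\|_1\le1$ and $\|Ay^{(k)}\|_X\ge2^k$, rescale by $2^{-k}$, and use the duality bound~\eqref{eq:dual} to get weak $1$-summability, hence weak $s$-summability for all $s\ge1$. You then conclude divergence of $\sum_k\|Ax^{(k)}\|_X^r$ for every finite $r$, exactly as the paper does.
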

\begin{proof}
	For each $k$, choose $y^{(k)}\in\ell_p$ with $\|y^{(k)}\|_1\le1$ and $\|Ay^{(k)}\|_X\ge2^k$, possible by~\eqref{eq:unboundedl1}. Define $x^{(k)}=2^{-k}y^{(k)}$. Then $\|x^{(k)}\|_1\le2^{-k}$ and $\|Ax^{(k)}\|_X\ge1$. For every $f\in(\ell_p)'$, formula~\eqref{eq:dual} yields
	\[
	\sum_k|f(x^{(k)})|\le\|b\|_\infty\sum_k\|x^{(k)}\|_1\le\|b\|_\infty<\infty.
	\]
	Therefore $(x^{(k)})$ is weakly $1$-summable. A scalar summable sequence belongs to $\ell_s$ for every $s\ge1$, so $(x^{(k)})$ is also weakly $s$-summable for all such $s$. On the other hand, for every finite $r\ge1$,
	\[
	\sum_k\|Ax^{(k)}\|_X^r\ge\sum_k1=\infty.
	\]
	No choice in this argument depends on $r$ or $s$.
\end{proof}

\begin{proposition}[A sharp diagonal criterion]\label{prop:diag}
	Let $0<p<1$, put $q=p/(1-p)$, and let $\lambda=(\lambda_n)\in\ell_\infty$. For the diagonal operator
	\[
	D_\lambda:\ell_p\longrightarrow\ell_p,\qquad
	D_\lambda(x_n)=(\lambda_nx_n),
	\]
	the following conditions are equivalent:
	\begin{enumerate}
		\item[(i)] $\lambda\notin\ell_q$;
		\item[(ii)] $\sup\{\|D_\lambda x\|_p:x\in\ell_p,\ \|x\|_1\le1\}=\infty$;
		\item[(iii)] $D_\lambda\notin\bigcup_{1\le s\le r<\infty}\Pi_{(r,s)}(\ell_p;\ell_p)$.
	\end{enumerate}
	If $\lambda\in\ell_q$, then $D_\lambda\in\Pi_{(2,1)}(\ell_p;\ell_p)$. Also, $D_\lambda$ is compact if and only if $\lambda\in c_0$.
\end{proposition}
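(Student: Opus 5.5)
The plan is to go around the cycle (i)$\Rightarrow$(ii)$\Rightarrow$(iii)$\Rightarrow$(i). The key step is an explicit formula for the quantity in (ii) as a function of $\lambda$. Lemma~\ref{lem:witness} will give (ii)$\Rightarrow$(iii) directly. For (iii)$\Rightarrow$(i) I will prove the contrapositive, namely the separate claim that $\lambda\in\ell_q$ forces $D_\lambda\in\Pi_{(2,1)}$. The compactness statement is handled on its own at the end.

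\emph{Computation of (ii).} I want to show that
\[
M(\lambda):=\sup\{\|D_\lambda x\|_p:\|x\|_1\le1\}=\|\lambda\|_q\in[0,\infty],
\]
with $q=p/(1-p)$; this is a reverse H\"older duality. Write $t_n=|x_n|\ge0$, so that $\sum t_n\le1$, and consider $\sum|\lambda_n|^pt_n^p$. Apply H\"older with exponents $1/p$ and $1/(1-p)$ to $|\lambda_n|^p$ and $t_n^p$:
\[
\sum|\lambda_n|^pt_n^p\le\Big(\sum t_n\Big)^p\Big(\sum|\lambda_n|^{p/(1-p)}\Big)^{1-p}.
\]
This gives $\|D_\lambda x\|_p\le\|x\|_1\|\lambda\|_q$. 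For the reverse direction, fix $N$ and take $t_n=|\lambda_n|^q/\sum_{k\le N}|\lambda_k|^q$ for $n\le N$ and $t_n=0$ otherwise; these are finitely supported vectors in $\ell_p$. A direct computation should give $\|D_\lambda x\|_p=(\sum_{n\le N}|\lambda_n|^q)^{1/q}$. Letting $N\to\infty$ yields $M(\lambda)=\|\lambda\|_q$, and therefore (i)$\Leftrightarrow$(ii). If the exponent bookkeeping misbehaves, I will fall back on crude choices, for instance equal weights on blocks where $|\lambda_n|$ is roughly constant, which suffice for unboundedness.

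\emph{(ii)$\Rightarrow$(iii) and the $\Pi_{(2,1)}$ claim.} Lemma~\ref{lem:witness} with $X=\ell_p$ and $A=D_\lambda$ gives (ii)$\Rightarrow$(iii) immediately. Now assume $\lambda\in\ell_q$; by the computation above, $\|D_\lambda x\|_p\le\|\lambda\|_q\|x\|_1$. Let $(x^{(m)})\subset\ell_p$ be weakly $1$-summable. Every $g\in(\ell_1)'=\ell_\infty$ restricts to an element of $(\ell_p)'$ by \eqref{eq:dual}, and $\ell_p\subset\ell_1$. Hence $(x^{(m)})$ is weakly $1$-summable in $\ell_1$, and Orlicz's theorem (Theorem~\ref{thm:orlicz}) gives $\sum_m\|x^{(m)}\|_1^2<\infty$. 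It follows that $\sum_m\|D_\lambda x^{(m)}\|_p^2\le\|\lambda\|_q^2\sum_m\|x^{(m)}\|_1^2<\infty$, so $D_\lambda\in\Pi_{(2,1)}$ and (iii) fails. This closes the cycle.

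\emph{Compactness.} Suppose $\lambda\in c_0$. Then $\|D_\lambda-D_{\lambda\mathbf 1_{[1,N]}}\|=\sup_{n>N}|\lambda_n|\to0$, where the norm is computed exactly as in Step~1 of Theorem~\ref{thm3.1}. Each truncated operator has finite rank, so $D_\lambda$ is a quasi-norm limit of finite-rank maps and is compact because $\mathcal K$ is closed. Conversely, suppose $|\lambda_{n_k}|\ge\varepsilon$ along a subsequence. The images $D_\lambda e_{n_k}=\lambda_{n_k}e_{n_k}$ are pairwise at distance at least $\varepsilon$ in $\ell_p$, since $\|\lambda_{n_k}e_{n_k}-\lambda_{n_j}e_{n_j}\|_p\ge\varepsilon$. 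So the image of the unit ball has no Cauchy subsequence, and $D_\lambda$ is not compact.

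The main obstacle is the exact extremal computation in (ii), specifically making sure that the H\"older exponents close up to $q=p/(1-p)$. Everything else is a direct appeal to Lemma~\ref{lem:witness} and Theorem~\ref{thm:orlicz}.
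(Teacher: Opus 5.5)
Your proposal is correct and follows essentially the same route as the paper. It uses the H\"older bound $\|D_\lambda x\|_p\le\|\lambda\|_q\|x\|_1$ with the extremal vectors $|\lambda_n|^q/H$ (packaged as the exact identity $M(\lambda)=\|\lambda\|_q$), Lemma~\ref{lem:witness} for (ii)$\Rightarrow$(iii), the passage to $\ell_1$ plus Orlicz's theorem for $D_\lambda\in\Pi_{(2,1)}$ when $\lambda\in\ell_q$, and the truncation and unit-vector arguments for compactness. The only cosmetic gaps are that the normalizing sum must be positive (automatic unless $\lambda=0$), and that $\|D_\mu\|=\|\mu\|_\infty$ comes from evaluating at the $e_n$ rather than from Step~1 of Theorem~\ref{thm3.1}.
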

\begin{proof}
	First, $\lambda\in\ell_\infty$ guarantees boundedness, and for every $x\in\ell_p$,
	\[
	\|D_\lambda x\|_p^p=\sum_n|\lambda_n|^p|x_n|^p
	\le\|\lambda\|_\infty^p\|x\|_p^p.
	\]
	Equality for the operator quasi-norm is obtained by evaluating at $e_n$ and taking the supremum over $n$, so $\|D_\lambda\|=\|\lambda\|_\infty$.
	
	Suppose (i). For any nonempty finite $F\subset\mathbb N$, set $H_F=\sum_{n\in F}|\lambda_n|^q$. Whenever $H_F>0$, define a finitely supported vector $z^F$ by
	\[
	z_n^F=\begin{cases}|\lambda_n|^q/H_F,&n\in F,\\0,&n\notin F.\end{cases}
	\]
	Then $\|z^F\|_1=1$. The algebraic identity $p(q+1)=q$ gives
	\begin{align*}
		\|D_\lambda z^F\|_p^p
		&=\sum_{n\in F}|\lambda_n|^p\frac{|\lambda_n|^{pq}}{H_F^p}
		=\frac{1}{H_F^p}\sum_{n\in F}|\lambda_n|^{p(q+1)}\\
		&=\frac{1}{H_F^p}\sum_{n\in F}|\lambda_n|^q
		=H_F^{1-p}.
	\end{align*}
	Since $(1-p)/p=1/q$, this yields
	\begin{equation}\label{eq:H}
		\|D_\lambda z^F\|_p=H_F^{1/q}.
	\end{equation}
	If $\lambda\notin\ell_q$, the values $H_F$ are arbitrarily large. Hence (ii) holds. The common-witness Lemma~\ref{lem:witness} implies (iii).
	
	Conversely, suppose $\lambda\in\ell_q$. Let $x\in\ell_1$ and apply H\"older's inequality with the conjugate exponents $1/(1-p)$ and $1/p$ to the nonnegative sequences $|\lambda_n|^p$ and $|x_n|^p$. We obtain
	\begin{align*}
		\|D_\lambda x\|_p^p
		&=\sum_n|\lambda_n|^p|x_n|^p\\
		&\le\left(\sum_n|\lambda_n|^{p/(1-p)}\right)^{1-p}
		\left(\sum_n|x_n|\right)^p
		=\|\lambda\|_q^p\|x\|_1^p.
	\end{align*}
	Thus $D_\lambda$ extends to a bounded linear operator $B:\ell_1\to\ell_p$ with $\|B\|\le\|\lambda\|_q$, and on $\ell_p$ we have $D_\lambda=B\circ\iota$, where $\iota:\ell_p\hookrightarrow\ell_1$ denotes the canonical inclusion.
	
	Let $(x^{(m)})\subset\ell_p$ be any weakly $1$-summable sequence. Every continuous functional on $\ell_1$ restricts along $\iota$ to a continuous functional on $\ell_p$; hence $(\iota x^{(m)})$ is weakly $1$-summable in $\ell_1$. By Orlicz's theorem (Theorem~\ref{thm:orlicz}),
	\[
	\sum_m\|\iota x^{(m)}\|_1^2<\infty.
	\]
	Consequently,
	\[
	\sum_m\|D_\lambda x^{(m)}\|_p^2
	=\sum_m\|B(\iota x^{(m)})\|_p^2
	\le\|B\|^2\sum_m\|\iota x^{(m)}\|_1^2<\infty.
	\]
	This proves $D_\lambda\in\Pi_{(2,1)}(\ell_p;\ell_p)$ and contradicts (iii); hence (iii) implies (i). It also proves directly that (ii) fails when $\lambda\in\ell_q$.
	
	For compactness, let $D_{\lambda,N}$ be the operator obtained by retaining only the first $N$ diagonal entries. It has finite rank, and from the exact diagonal norm identity,
	\[
	\|D_\lambda-D_{\lambda,N}\|=\sup_{n>N}|\lambda_n|.
	\]
	Thus $\lambda\in c_0$ implies convergence in the operator quasi-norm to finite-rank operators, hence compactness. Conversely, if $D_\lambda$ is compact, the sequence $(D_\lambda e_n)=(\lambda_ne_n)$ must have a convergent subsequence whenever the coefficients stay bounded away from zero. But if infinitely many $|\lambda_n|\ge\varepsilon>0$, then for distinct such indices $n,m$,
	\[
	\|\lambda_ne_n-\lambda_me_m\|_p^p=|\lambda_n|^p+|\lambda_m|^p\ge2\varepsilon^p,
	\]
	so no subsequence can converge, a contradiction. Therefore $\lambda_n\to0$.
\end{proof}

\begin{corollary}[The logarithmic compact example]\label{cor:log}
	For every $0<p<1$, let $\lambda_n=(\log(n+1))^{-1/p}$. Then the diagonal multiplier $D_\lambda$ is compact and satisfies
	\[
	D_\lambda\notin\Pi_{(r,s)}(\ell_p;\ell_p)
	\quad\text{for all } \ 1\le s\le r<\infty.
	\]
\end{corollary}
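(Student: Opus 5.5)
The plan is to deduce the statement directly from Proposition~\ref{prop:diag}, so only two facts about $\lambda_n=(\log(n+1))^{-1/p}$ need checking: that $\lambda\in c_0$ (which gives compactness) and that $\lambda\notin\ell_q$ with $q=p/(1-p)$ (which gives the failure of all $(r,s)$-summability conditions).

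First, $\lambda\in\ell_\infty$. Since $\log(n+1)\ge\log 2>0$ for all $n\ge1$, the sequence is positive and decreasing, bounded by $(\log 2)^{-1/p}$. Moreover $\log(n+1)\to\infty$ and $1/p>0$, so $\lambda_n\to0$, i.e.\ $\lambda\in c_0$. The last assertion of Proposition~\ref{prop:diag} then yields compactness of $D_\lambda$. If one prefers to avoid that assertion, the finite-rank truncations $D_{\lambda,N}$ satisfy $\|D_\lambda-D_{\lambda,N}\|=\sup_{n>N}\lambda_n=\lambda_{N+1}\to0$, and $\mathcal K(\ell_p,\ell_p)$ is closed.

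Second, $\lambda\notin\ell_q$. We compute
\[
\lambda_n^q=(\log(n+1))^{-q/p}=(\log(n+1))^{-1/(1-p)}.
\]
Set $\beta=1/(1-p)>1$. Since $(\log t)^{\beta}=o(t)$ as $t\to\infty$, there is $n_0$ with $(\log(n+1))^{\beta}\le n+1$ for $n\ge n_0$. Hence $\lambda_n^q\ge 1/(n+1)$ for $n\ge n_0$, and $\sum_n\lambda_n^q=\infty$ by comparison with the harmonic series. So condition (i) of Proposition~\ref{prop:diag} holds, and (iii) gives $D_\lambda\notin\Pi_{(r,s)}(\ell_p;\ell_p)$ for every $1\le s\le r<\infty$.

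Via Lemma~\ref{lem:witness} and \eqref{eq:H}, the failures are witnessed by a single weakly $1$-summable sequence, namely suitably rescaled normalized blocks $z^F$ with $H_F$ large. The only step needing any care is the divergence estimate, and it reduces to the elementary bound $(\log t)^\beta\le t$ for large $t$. The choice of exponent $-1/p$ matters: it makes the decay so slow that $\lambda\notin\ell_q$ for every $0<p<1$, even though $\lambda\in c_0$.
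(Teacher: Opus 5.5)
Your proof is correct and is the same as the paper's. Compactness comes from $\lambda\in c_0$, and the failure of every $(r,s)$-condition comes from $\lambda\notin\ell_q$, shown by comparing $(\log(n+1))^{-1/(1-p)}$ with the harmonic series; both facts are then fed into Proposition~\ref{prop:diag}. One small correction to your closing remark: any negative power of $\log(n+1)$ would work equally well, so the exponent $-1/p$ is not actually essential.
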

\begin{proof}
	Since $\lambda_n\to0$, the multiplier is compact. For $q=p/(1-p)$,
	\[
	\sum_{n=1}^\infty|\lambda_n|^q
	=\sum_{n=1}^\infty\frac{1}{(\log(n+1))^{1/(1-p)}}.
	\]
	For sufficiently large $n$, $(\log(n+1))^{1/(1-p)}\le n$, so the terms eventually dominate $1/n$. Their sum therefore diverges, and Proposition~\ref{prop:diag} proves the result.
\end{proof}

\begin{lemma}[An explicit compact operator and a single witness]\label{lem4.1}
	Let $0<p<1$ and $q=p/(1-p)$. Put $w_n=n^{-1/q}$ and define
	\begin{equation}\label{eq:D}
		K:\ell_p\longrightarrow\ell_p,\qquad K(x_n)=(w_nx_n)_{n\ge1}.
	\end{equation}
	Then $K$ is compact, has operator quasi-norm $\|K\|=1$, and belongs to $\mathscr N(\ell_p,\ell_p)$. In fact, there exist pairwise disjoint finite intervals $I_k\subset\mathbb N$ and $x^{(k)}\in\ell_p$ supported in $I_k$ such that
	\begin{equation}\label{eq:explicit}
		\sum_{k=1}^\infty\|x^{(k)}\|_1=1,
		\qquad\|Kx^{(k)}\|_p\ge1\quad(k\ge1).
	\end{equation}
	The same sequence $(x^{(k)})$ works for every pair $(r,s)$.
\end{lemma}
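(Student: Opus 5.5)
The first two claims will come from Proposition~\ref{prop:diag}. Since $w_n=n^{-1/q}\to0$, the multiplier $K=D_w$ is compact, and $\|K\|=\|w\|_\infty=w_1=1$. Moreover $\sum_n w_n^q=\sum_n n^{-1}=\infty$, so $w\notin\ell_q$, and Proposition~\ref{prop:diag} gives $K\in\mathscr N(\ell_p,\ell_p)$. The content that remains is the explicit witness~\eqref{eq:explicit}. I will build it from the vectors $z^F$ in the proof of Proposition~\ref{prop:diag}, taking the sets $F$ to be disjoint intervals.

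\emph{Choice of intervals.} Set $H_I=\sum_{n\in I}w_n^q=\sum_{n\in I}1/n$. I will choose consecutive intervals $I_k=[N_{k-1}+1,N_k]$, with $N_0=0$ and $N_k$ increasing, so that $H_{I_k}\ge 2^{kq}$. This is possible because the harmonic series diverges: once $N_{k-1}$ is fixed, any sufficiently large $N_k$ works. By~\eqref{eq:H}, the normalized vector $z^{I_k}$ satisfies $\|z^{I_k}\|_1=1$ and
\[
\|Kz^{I_k}\|_p=H_{I_k}^{1/q}\ge 2^k.
\]

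\emph{Rescaling.} Put $x^{(k)}=2^{-k}z^{I_k}$. This vector is supported in $I_k$, and
\[
\|x^{(k)}\|_1=2^{-k},\qquad \sum_{k\ge1}\|x^{(k)}\|_1=1,\qquad \|Kx^{(k)}\|_p\ge1 .
\]
So~\eqref{eq:explicit} holds. The common-witness argument of Lemma~\ref{lem:witness} then applies verbatim. By~\eqref{eq:dual}, $\sum_k|f(x^{(k)})|\le\|b\|_\infty$ for every $f\in(\ell_p)'$, so the sequence is weakly $1$-summable, and hence weakly $s$-summable for every $s\ge1$. On the other hand, $\sum_k\|Kx^{(k)}\|_p^r=\infty$ for every finite $r$. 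Thus one sequence witnesses every failure.

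The only potential obstacle is bookkeeping: making the sum of the $\ell_1$-norms equal exactly $1$, rather than merely bounded. The normalization $\|z^F\|_1=1$ combined with the weights $2^{-k}$ handles this. Because $H_{I_k}$ is only required to exceed $2^{kq}$, with no upper bound, there is no tension in choosing the intervals.
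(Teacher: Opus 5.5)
Your proof is correct and is essentially the paper's. Since $w_n^q=1/n$, your vector $2^{-k}z^{I_k}$ is exactly the paper's $x^{(k)}_n=2^{-k}/(H_k n)$ on $I_k$, and your intervals, the bound $\|Kx^{(k)}\|_p=2^{-k}H_k^{1/q}\ge1$, and the weak-summability argument via \eqref{eq:dual} coincide with the paper's; the only difference is that you cite Proposition~\ref{prop:diag} and \eqref{eq:H} for compactness, $\|K\|=1$, membership in $\mathscr N(\ell_p,\ell_p)$ and the norm computation, whereas the paper recomputes these directly.
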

\begin{proof}
	The scalar weights satisfy $0<w_n\le1$, $w_1=1$ and $w_n\to0$. As in the preceding proposition, the finite-rank truncations $K_N$ satisfy
	\[
	\|K-K_N\|=\sup_{n>N}n^{-1/q}=(N+1)^{-1/q}\longrightarrow0.
	\]
	Thus $K$ is compact, and the equality $\|K\|=1$ follows from evaluating at $e_1$.
	
	The harmonic series diverges on every tail. We may therefore inductively choose disjoint consecutive finite intervals $I_k$, each to the right of the preceding one, such that
	\[
	H_k:=\sum_{n\in I_k}\frac1n\ge2^{kq}.
	\]
	Define $x^{(k)}=(x_n^{(k)})$ by
	\begin{equation}\label{eq:xk}
		x_n^{(k)}=\begin{cases}
			\displaystyle\frac{2^{-k}}{H_kn},&n\in I_k,\\[4pt]
			0,&n\notin I_k.
		\end{cases}
	\end{equation}
	Each vector has finite support, so it belongs to $\ell_p$, and
	\[
	\|x^{(k)}\|_1=\sum_{n\in I_k}\frac{2^{-k}}{H_kn}
	=\frac{2^{-k}}{H_k}H_k=2^{-k}.
	\]
	Next, $1/q=(1-p)/p$, so $p+p/q=1$. Therefore
	\begin{align*}
		\|Kx^{(k)}\|_p^p
		&=\sum_{n\in I_k} n^{-p/q}
		\left(\frac{2^{-k}}{H_kn}\right)^p\\
		&=\frac{2^{-kp}}{H_k^p}
		\sum_{n\in I_k}n^{-p(1+1/q)}\\
		&=\frac{2^{-kp}}{H_k^p}\sum_{n\in I_k}\frac1n
		=2^{-kp}H_k^{1-p}.
	\end{align*}
	Taking $p$th roots and using $H_k\ge2^{kq}$ yields
	\[
	\|Kx^{(k)}\|_p
	=2^{-k}H_k^{(1-p)/p}
	=2^{-k}H_k^{1/q}\ge2^{-k}2^k=1.
	\]
	This proves~\eqref{eq:explicit}. If $f\in(\ell_p)'$, by~\eqref{eq:dual},
	\[
	\sum_{k=1}^\infty|f(x^{(k)})|
	\le\|b\|_\infty\sum_{k=1}^\infty\|x^{(k)}\|_1
	=\|b\|_\infty<\infty.
	\]
	Consequently $(x^{(k)})$ is weakly $1$-summable and hence weakly $s$-summable for each $s\ge1$. In contrast, for every finite $r\ge1$,
	\[
	\sum_{k=1}^\infty\|Kx^{(k)}\|_p^r\ge\sum_{k=1}^\infty1=\infty.
	\]
	The simultaneous non-summability follows for all $1\le s\le r<\infty$.
\end{proof}

\begin{theorem}[Maximal spaceability of compact operators]\label{thm:compact}
	Let $0<p<1$. Then
	\[
	\mathcal K(\ell_p,\ell_p)\setminus
	\bigcup_{1\le s\le r<\infty}\Pi_{(r,s)}(\ell_p;\ell_p)
	\]
	is maximal spaceable in $\mathcal K(\ell_p,\ell_p)$ and contains, apart from zero, a closed subspace linearly isometric to $\ell_p$. This subspace has dimension $\mathfrak c$, so it is also of maximal dimension in $\mathcal L(\ell_p,\ell_p)$. Every nonzero operator in it has infinite rank, and the same weakly $1$-summable sequence witnesses the failure of every admissible $(r,s)$-condition for every such operator.
\end{theorem}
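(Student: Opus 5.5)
The plan is to combine the disjoint-support isometry of Theorem~\ref{thm3.1} with the compact multiplier $K$ of Lemma~\ref{lem4.1}. For $a\in\ell_p$ define
\[
T(a)=S(a)\circ K=\sum_{j=1}^\infty a_j\,u_jK ,
\]
where $S(a)$ is the operator from \eqref{eq:S}. Since composition on the right with the fixed bounded operator $K$ is continuous in the operator quasi-norm, the series converges in $\mathcal L(\ell_p,\ell_p)$. Each partial sum $\sum_{j\le N}a_ju_jK$ is compact. Because $\mathcal K(\ell_p,\ell_p)$ is closed, $T(a)$ is compact; alternatively, $T(a)=S(a)K$ is a bounded operator composed with a compact one. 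Linearity of $a\mapsto T(a)$ is immediate.

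The key identity comes from \eqref{eq:Sidentity} applied to $Kx$:
\[
\|T(a)x\|_p=\|a\|_p\,\|Kx\|_p\qquad(a,x\in\ell_p).
\]
Taking the supremum over $\|x\|_p\le1$ and using $\|K\|=1$ (attained at $e_1$, since $Ke_1=e_1$) gives $\|T(a)\|=\|a\|_p$. Hence $T$ is a linear isometry into $\mathcal K(\ell_p,\ell_p)$. Closedness of $T(\ell_p)$ then follows exactly as in Step~3 of Theorem~\ref{thm3.1}: a convergent sequence $T(a^{(m)})$ is Cauchy, so $(a^{(m)})$ is Cauchy in $\ell_p$, and completeness of $\ell_p$ finishes the argument. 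Injectivity gives $\dim T(\ell_p)=\mathfrak c$, which by \eqref{eq:dimensions} is maximal both in $\mathcal K(\ell_p,\ell_p)$ and in $\mathcal L(\ell_p,\ell_p)$.

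For non-summability, fix $a\neq0$ and take the single sequence $(x^{(k)})$ of Lemma~\ref{lem4.1}. It is weakly $1$-summable, hence weakly $s$-summable for every $s\ge1$, and it satisfies
\[
\|T(a)x^{(k)}\|_p=\|a\|_p\|Kx^{(k)}\|_p\ge\|a\|_p>0 .
\]
Thus $\sum_k\|T(a)x^{(k)}\|_p^r=\infty$ for every finite $r$. The same sequence therefore witnesses every failure, independently of $a$ and of $(r,s)$. One could instead apply Lemma~\ref{lem:witness} to $T(a)$, but the direct estimate already gives the common-witness statement.

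For infinite rank, the vectors $T(a)e_n=w_n\,S(a)e_n$ have norm $w_n\|a\|_p$. Pick $j$ with $a_j\ne0$. The $\mathbb N_j$-block of $T(a)e_n$ is $a_jw_ne_{n^{(j)}_n}$, and these blocks are supported on distinct coordinates. Since projection onto the $\mathbb N_j$-coordinates is linear, the images $T(a)e_n$, $n\ge1$, are linearly independent, so the rank is infinite. I expect no serious obstacle. The only point needing care is the transfer of the exact identity \eqref{eq:Sidentity} through $K$, which holds because that identity is valid for every vector, in particular for $Kx$.
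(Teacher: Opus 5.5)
Your proposal is correct and follows the paper's proof essentially step by step: the same operator $S(a)\circ K$, the transfer of \eqref{eq:Sidentity} to the vector $Kx$, the isometry via $Ke_1=e_1$, closedness, and the single witness from Lemma~\ref{lem4.1}. The only deviation is the infinite-rank step, which is equally valid: you project onto the $\mathbb N_j$-block, whereas the paper combines the injectivity of $S(a)$ for $a\neq0$ (from \eqref{eq:Sidentity}) with the injectivity and infinite-dimensional range of $K$.
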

\begin{proof}
	Let $K$ and $(x^{(k)})$ be as in Lemma~\ref{lem4.1}. Set
	\[
	S_K(a)=S(a)\circ K=\sum_{j=1}^\infty a_j(u_j\circ K),
	\qquad a=(a_j)\in\ell_p,
	\]
	where the identity between the two expressions follows from the operator quasi-norm convergence of the series defining $S(a)$.
	
	\smallskip\noindent\emph{Compactness of the whole operator series.}
	Since $K$ is compact and every $u_j$ is bounded, $u_jK$ is compact. For any $M>N$, equation~\eqref{eq:finiteidentity} applied to the vector $Kx$ yields
	\[
	\left\|\sum_{j=N+1}^{M}a_ju_jKx\right\|_p^p
	=\left(\sum_{j=N+1}^{M}|a_j|^p\right)\|Kx\|_p^p.
	\]
	Taking the supremum over the unit ball, we obtain the exact tail operator quasi-norm
	\[
	\left\|\sum_{j=N+1}^{M}a_ju_jK\right\|
	=\left(\sum_{j=N+1}^{M}|a_j|^p\right)^{1/p}\|K\|.
	\]
	Thus the partial sums form a Cauchy sequence of \emph{compact operators}. Since $\mathcal K(\ell_p,\ell_p)$ is closed in $\mathcal L(\ell_p,\ell_p)$, their limit $S_K(a)=S(a)K$ is compact.
	
	\smallskip\noindent\emph{The second exact isometry.}
	Applying~\eqref{eq:Sidentity} to $y=Kx$ gives, for every $x\in\ell_p$,
	\begin{equation}\label{eq:compactidentity}
		\|S_K(a)x\|_p^p
		=\|S(a)(Kx)\|_p^p
		=\|a\|_p^p\|Kx\|_p^p.
	\end{equation}
	Consequently,
	\[
	\|S_K(a)x\|_p\le\|a\|_p\|K\|\|x\|_p
	=\|a\|_p\|x\|_p.
	\]
	Since $Ke_1=e_1$ and $\|e_1\|_p=1$, the lower bound is attained at $x=e_1$:
	\[
	\|S_K(a)e_1\|_p=\|a\|_p\|Ke_1\|_p=\|a\|_p.
	\]
	Thus
	\begin{equation}\label{eq:compisometry}
		\boxed{\ \|S_K(a)\|_{\mathcal K(\ell_p,\ell_p)}=\|a\|_p.\ }
	\end{equation}
	Linearity follows immediately from the construction. For $a,b\in\ell_p$,
	\[
	\|S_K(a)-S_K(b)\|
	=\|S_K(a-b)\|=\|a-b\|_p.
	\]
	Therefore, if $S_K(a^{(m)})$ is Cauchy or convergent in $\mathcal K(\ell_p,\ell_p)$, then $(a^{(m)})$ is Cauchy in $\ell_p$, has a limit $a$, and $S_K(a^{(m)})\to S_K(a)$. This proves that $S_K(\ell_p)$ is closed and isometric to $\ell_p$.
	
	\smallskip\noindent\emph{One witness for all pairs and all nonzero operators.}
	Fix $a\ne0$. By \eqref{eq:explicit} and \eqref{eq:compactidentity},
	\[
	\|S_K(a)x^{(k)}\|_p
	=\|a\|_p\|Kx^{(k)}\|_p\ge\|a\|_p>0
	\qquad(k\ge1).
	\]
	The sequence $(x^{(k)})$ is weakly $1$-summable and hence weakly $s$-summable for all $s\ge1$. Therefore, for every $1\le s\le r<\infty$,
	\[
	\sum_{k=1}^\infty\|S_K(a)x^{(k)}\|_p^r
	\ge\|a\|_p^r\sum_{k=1}^\infty1=\infty.
	\]
	The same sequence works simultaneously for every admissible pair, with no dependence on $a$ except through the positive factor $\|a\|_p$. Moreover, $K$ is injective and has infinite-dimensional range (all its weights are positive), while $S(a)$ is injective whenever $a\ne0$ by~\eqref{eq:Sidentity}; hence $S_K(a)=S(a)K$ has infinite rank. Finally, the isometry and~\eqref{eq:dimensions} give
	\[
	\dim S_K(\ell_p)=\mathfrak c
	=\dim\mathcal K(\ell_p,\ell_p),
	\]
	proving maximality.
\end{proof}

\begin{remark}\label{rem:compact}
	The logarithmic multiplier in Corollary~\ref{cor:log} could also be used instead of $K$ in the proof of Theorem~\ref{thm:compact}. Writing $S_{D_\lambda}(a):=S(a)\circ D_\lambda$ for this alternative construction, the exact operator quasi-norm is $\|S_{D_\lambda}(a)\|=\|a\|_p\|D_\lambda\|$. Dividing by the nonzero constant $\|D_\lambda\|$ yields an isometric embedding. The power-weight choice~\eqref{eq:D} has the advantage that its norm equals $1$ and the common witness can be given by the explicit formula~\eqref{eq:xk}. The compact examples are operator quasi-norm limits of finite-rank maps, but this does not imply they are absolutely summing: the summing classes in question need not be closed for the ambient operator quasi-norm.
\end{remark}

\begin{theorem}[Isomorphic extensions to quasi-Banach ranges]\label{thm:extension}
	Let $0<p<1$ and let $X$ be a quasi-Banach space containing an isomorphic copy of $\ell_p$. Then
	\[
	\mathcal L(\ell_p;X)\setminus
	\bigcup_{1\le s\le r<\infty}\Pi_{(r,s)}(\ell_p;X)
	\]
	and
	\[
	\mathcal K(\ell_p,X)\setminus
	\bigcup_{1\le s\le r<\infty}\Pi_{(r,s)}(\ell_p;X)
	\]
	are $\mathfrak c$-spaceable in their respective ambient spaces. Both contain closed subspaces isomorphic to $\ell_p$. If $X$ is separable, both conclusions are maximal spaceable.
\end{theorem}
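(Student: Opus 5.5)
Let $J:\ell_p\to X$ be an isomorphic embedding, so there are constants $0<c\le C$ with $c\|y\|_p\le\|Jy\|_X\le C\|y\|_p$ for all $y\in\ell_p$. The plan is to compose the isometric constructions of Theorems~\ref{thm3.1} and~\ref{thm:compact} with $J$. Set $T(a)=J\circ S(a)$ and $T_K(a)=J\circ S_K(a)=J\circ S(a)\circ K$ for $a\in\ell_p$. The key point is that the exact disjoint-support identity \eqref{eq:Sidentity} (respectively \eqref{eq:compactidentity}) is used inside $\ell_p$, \emph{before} applying $J$. One then only needs the two-sided bounds of $J$:
\[
c\|a\|_p\|x\|_p\le\|T(a)x\|_X\le C\|a\|_p\|x\|_p,
\qquad
c\|a\|_p\|Kx\|_p\le\|T_K(a)x\|_X\le C\|a\|_p\|Kx\|_p .
\]

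\emph{Linearity, boundedness, compactness.} The map $a\mapsto T(a)$ is linear, since $S$ is linear and $J$ is linear. Taking suprema over the unit ball and evaluating at $e_1$ (where $Ke_1=e_1$) gives $c\|a\|_p\le\|T(a)\|\le C\|a\|_p$, and the same bounds for $T_K$. Each $T_K(a)$ is compact: $S(a)K$ is compact by Theorem~\ref{thm:compact}, and composing with the bounded $J$ preserves compactness, because $J$ maps a relatively compact set to a relatively compact set.

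\emph{Closedness and dimension.} Apply the norm estimate to $a-b$: if $T(a^{(m)})$ converges in $\mathcal L(\ell_p;X)$, then $(a^{(m)})$ is Cauchy in $\ell_p$ and has a limit $a$, and the upper bound gives $T(a^{(m)})\to T(a)$. Hence $T(\ell_p)$ is closed, and it is isomorphic to $\ell_p$ via $T$. The same argument works for $T_K$, using that $\mathcal K(\ell_p,X)$ is closed in $\mathcal L(\ell_p,X)$. Injectivity gives Hamel dimension $\dim\ell_p=\mathfrak c$ by \eqref{eq:dimensions}.

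\emph{Failure of summability.} For the compact family, fix $a\neq0$ and use the single witness $(x^{(k)})$ of Lemma~\ref{lem4.1}. We get $\|T_K(a)x^{(k)}\|_X\ge c\|a\|_p\|Kx^{(k)}\|_p\ge c\|a\|_p>0$, so all $(r,s)$-conditions fail at once. For the bounded family, $T_K(a)\in T(\ell_p)$ is not enough, so argue directly. Fix $(r,s)$, take a Maddox witness from \eqref{eq:maddoxwitness}, and use $\|T(a)x^{(m)}\|_X\ge c\|a\|_p\|x^{(m)}\|_p$. Alternatively, note that $T(a)e_n$ has norm at least $c\|a\|_p$ while $\|e_n\|_1=1$, which gives \eqref{eq:unboundedl1} after scaling. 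Then apply Lemma~\ref{lem:witness}. One checks the scaling works: for $x$ with $\|x\|_1\le1$ one can take the normalized vectors from the diagonal construction in Proposition~\ref{prop:diag}. Simplest is the Maddox route, since weak summability of $(x^{(m)})$ is a property of the domain $\ell_p$ only.

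\emph{Maximality.} This is the step needing care. If $X$ is separable, it suffices to show $|\mathcal L(\ell_p;X)|\le\mathfrak c$. An operator is determined by $(Ae_n)_n\in X^{\mathbb N}$ (by density of finitely supported vectors and continuity), and $|X|\le\mathfrak c$ since $X$ is a separable metrizable space, so every point is a limit of a sequence from a countable dense set. Hence $|X^{\mathbb N}|\le\mathfrak c$, and both ambient spaces have dimension exactly $\mathfrak c$, which yields maximal spaceability. The main obstacle I anticipate is verifying that the quasi-triangle constant of $X$ causes no trouble. The argument above avoids it: all series are summed in $\ell_p$, where the $p$-norm identities hold, and $J$ is applied only afterwards.
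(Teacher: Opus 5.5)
Your proof is correct and is essentially the paper's argument. It uses the same operators $J\circ S(a)$ and $J\circ S(a)\circ K$, obtains the two-sided bounds by applying $J$ to the whole vector $S(a)x$ after \eqref{eq:Sidentity}, gets closedness from the lower bound on differences, and uses the same count $|\mathcal L(\ell_p;X)|\le\mathfrak c^{\aleph_0}=\mathfrak c$. The only deviation is in the bounded family: you use pair-dependent Maddox witnesses, whereas the paper simply reuses the single witness of Lemma~\ref{lem4.1}, since $\|x^{(k)}\|_p\ge\|Kx^{(k)}\|_p\ge1$. Incidentally, your aside that $e_n$ gives \eqref{eq:unboundedl1} ``after scaling'' is wrong as stated; only flat vectors such as $|F|^{-1}\sum_{n\in F}e_n$, whose $p$-norm is $|F|^{1/q}$, do.
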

\begin{proof}
	Let $T:\ell_p\to X$ be an isomorphic embedding. By definition, there are constants $0<c\le C<\infty$ such that
	\begin{equation}\label{eq:TT}
		c\|y\|_p\le\|Ty\|_X\le C\|y\|_p
		\qquad(y\in\ell_p).
	\end{equation}
	Let $S(a)$ be the operator from~\eqref{eq:S} and $K$ the compact diagonal operator in~\eqref{eq:D}. Define
	\[
	\Phi(a)=T\circ S(a)\in\mathcal L(\ell_p;X),
	\qquad\Psi(a)=T\circ S(a)\circ K\in\mathcal K(\ell_p,X).
	\]
	Both maps are linear in $a$, and $\Psi(a)$ is compact because $K$ is compact and the remaining factors are bounded.
	
	\smallskip\noindent\emph{Uniform convergence and two-sided operator bounds.}
	For the bounded family, the partial sums
	$\Phi_N(a)=\sum_{j=1}^Na_jTu_j$ converge in the operator quasi-norm because
	\[
	\|\Phi_M(a)-\Phi_N(a)\|
	\le C\left(\sum_{j=N+1}^M|a_j|^p\right)^{1/p}.
	\]
	Similarly, $\Psi_N(a)=\sum_{j=1}^Na_jTu_jK$ converge in the operator quasi-norm and are compact. Most importantly, the inequalities~\eqref{eq:TT} are applied to the \emph{entire} vector $S(a)x$ before any attempt to estimate individual summands. From~\eqref{eq:Sidentity} we obtain
	\[
	c\|a\|_p\|x\|_p
	\le\|T(S(a)x)\|_X
	\le C\|a\|_p\|x\|_p.
	\]
	Taking the supremum over $\|x\|_p\le1$ and evaluating at $x=e_1$, we get
	\begin{equation}\label{eq:Phi}
		c\|a\|_p\le\|\Phi(a)\|\le C\|a\|_p.
	\end{equation}
	Applying the same argument to $S(a)(Kx)$, we find
	\[
	c\|a\|_p\|Kx\|_p
	\le\|T(S(a)(Kx))\|_X
	\le C\|a\|_p\|Kx\|_p.
	\]
	Because $\|K\|=1$ and $Ke_1=e_1$, taking the supremum and testing at $e_1$ gives
	\begin{equation}\label{eq:Psi}
		c\|a\|_p\le\|\Psi(a)\|\le C\|a\|_p.
	\end{equation}
	These are the correct estimates in arbitrary $X$: no assumption is made that $T$ preserves coordinate supports or the exact $p$-norm equality.
	
	\smallskip\noindent\emph{Closedness.}
	If $\Phi(a^{(m)})$ converges in $\mathcal L(\ell_p;X)$, it is Cauchy; the lower bound in~\eqref{eq:Phi} applied to differences gives
	\[
	\|a^{(m)}-a^{(n)}\|_p
	\le c^{-1}\|\Phi(a^{(m)})-\Phi(a^{(n)})\|.
	\]
	Thus $(a^{(m)})$ is Cauchy in $\ell_p$, and its limit $a$ satisfies $\Phi(a^{(m)})\to\Phi(a)$ by the upper bound. The range of $\Phi$ is closed. Repeating this with~\eqref{eq:Psi} proves the range of $\Psi$ is closed in $\mathcal K(\ell_p,X)$. Both are topologically isomorphic to $\ell_p$, because the two-sided inequalities also imply injectivity and bounded inverses on their ranges.
	
	\smallskip\noindent\emph{Simultaneous non-summability.}
	Use the common witness $(x^{(k)})$ of Lemma~\ref{lem4.1}. Since $\|K\|=1$ and $\|Kx^{(k)}\|_p\ge1$, one has $\|x^{(k)}\|_p\ge1$. If $a\ne0$, then
	\begin{align*}
		\|\Phi(a)x^{(k)}\|_X
		&\ge c\|S(a)x^{(k)}\|_p
		=c\|a\|_p\|x^{(k)}\|_p\ge c\|a\|_p,\\
		\|\Psi(a)x^{(k)}\|_X
		&\ge c\|S(a)(Kx^{(k)})\|_p
		=c\|a\|_p\|Kx^{(k)}\|_p\ge c\|a\|_p.
	\end{align*}
	Since the sequence is weakly $s$-summable for every $s\ge1$, the lower bounds yield divergence of the sums of the $r$th powers for every finite $r\ge1$. Thus all nonzero operators in both closed ranges belong to the asserted complements, simultaneously for all pairs.
	
	\smallskip\noindent\emph{Dimension and maximality.}
	Each range has Hamel dimension $\mathfrak c$. If $X$ is separable, then $|X|\le\mathfrak c$. An operator in $\mathcal L(\ell_p;X)$ is determined by its action on the countable set $(e_n)$, so
	\[
	|\mathcal L(\ell_p;X)|\le|X|^{\aleph_0}\le\mathfrak c^{\aleph_0}=\mathfrak c.
	\]
	Both ambient operator spaces contain the constructed $\mathfrak c$-dimensional subspaces and therefore have Hamel dimension exactly $\mathfrak c$. This establishes maximality when $X$ is separable.
\end{proof}

\begin{remark}\label{rem:extension}
	The separability hypothesis in the maximality assertion cannot be omitted without further information on the dimension of $X$. Indeed, if $\Gamma$ is an index set with $|\Gamma|>\mathfrak c$ and $X=\ell_p(\Gamma)$, then $X$ contains a copy of $\ell_p$, but its Hamel dimension exceeds $\mathfrak c$. The injection
	\[
	X\longrightarrow\mathcal K(\ell_p,X),\qquad
	z\longmapsto(x\mapsto x_1z)
	\]
	shows that both ambient operator spaces have dimension greater than $\mathfrak c$. The theorem accordingly asserts $\mathfrak c$-spaceability for all such $X$, and maximal spaceability when the dimension is known to be $\mathfrak c$. The norm estimates~\eqref{eq:Phi}--\eqref{eq:Psi} also resolve the subtlety raised by arbitrary quasi-Banach codomains: it would be incorrect to infer closedness only from coordinatewise convergence of coefficients, or to use support disjointness after applying $T$.
\end{remark}

\begin{remark}\label{rem:finite}
	For each admissible pair, every finite-rank operator $A:\ell_p\to X$ is $(r,s)$-absolutely summing: writing $Ax=\sum_{i=1}^Nf_i(x)y_i$ with continuous $f_i\in(\ell_p)'$, a weakly $s$-summable sequence has each scalar image $(f_i(x^{(k)}))_k\in\ell_s\subseteq\ell_r$, and a finite-dimensional quasi-norm estimate gives $\sum_k\|Ax^{(k)}\|_X^r<\infty$. Thus the compact operators in our constructed subspace cannot have finite rank. They are nevertheless uniform operator quasi-norm limits of finite-rank operators, which explains why compactness alone does not force absolute summability here.
\end{remark}

\begin{ack}
The author would like to thank the anonymous referee for their careful reading of the manuscript and their valuable comments and suggestions, which helped improve the presentation of this work.
\end{ack}

\section*{Declarations}
\textbf{Conflict of Interest} The author declares no conflict of interest.

\section*{Funding}
\textbf{Financial support}	This research has no financial support.

\section*{Data Availability Statements}
No datasets were generated or analyzed during the current study.

\end{document}